\title{Note on a certain category of mod $p$ representations}
\author{Reinier Sorgdrager}
\date{}
\DeclareMathOperator{\GL}{GL}
\DeclareMathOperator{\Gal}{Gal}
\newcommand{\gres}{\text{gr}_{N,\text{res}}}
\newcommand{\grint}{\text{gr}_{N,\text{int}}}
\DeclareMathOperator{\gr}{gr}
\newtheorem{proposition}{Proposition}
\newtheorem{lemma}{Lemma}
\newtheorem{theorem}{Theorem}
\newtheorem{warning}{Warning}
\theoremstyle{definition}
\newtheorem{definition}{Definition}
\begin{document}
	\maketitle
	\abstract{Let $p>3$ be a prime number, $f\geq1$ an integer. We consider a certain full subcategory $\mathcal C$ of the category of smooth admissible mod $p$ representations of either $\GL_2\mathbf Q_{p^f}$ or of the group of units of the quaternion algebra over $\mathbf Q_{p^f}$. This category was introduced in the context of the mod $p$ Langlands program by \cite{BHHMSConjs} in the $\GL_2$-case and by \cite{hu2024modprepresentationsquaternion} in the quaternion case.\\
		We prove that whether a smooth admissible mod $p$ representation $\pi$ (with central character) belongs to $\mathcal C$ is completely determined by the restriction of $\pi$ to an arbitrarily small open subgroup.}
	\section{Introduction}
	Let $f\geq1$ and $K=\mathbf Q_{p^f}$, for $p$ a prime bigger than $3$. Let $\mathbf F$ be a finite field containing $\mathbf F_{p^f}$.\\
	We consider a $p$-adic Lie group $G$ which is either
	\begin{itemize}
		\item\textbf{$\GL_2$-case}: the group $I_1/Z_1$, where $I_1=\begin{pmatrix}
			1+p\mathcal O_K&\mathcal O_K\\p\mathcal O_K&1+p\mathcal O_K
		\end{pmatrix}\subset\GL_2K$ is the upper-triangular pro-$p$ Iwahori subgroup and $Z_1$, the center, its subgroup of scalar matrices,\end{itemize} or \begin{itemize}
		\item \textbf{quaternion case}: the group $U_1/Z_1$, where $U_1=1+\Pi\mathcal O_D\subset D^\times$ is the subgroup of the units of the quaternion algebra $D$ over $K$ of invariant $1/2$ of elements that are $1$ modulo a uniformizer $\Pi$ (that we choose to be a square root of $p$); the group $Z_1=1+p\mathcal O_K$ then denotes the center.
	\end{itemize}
	In both cases the $p$-adic Lie group $G$ is pro-$p$ of dimension $3f$.\\
	
	In \cite{BHHMSConjs} and \cite{hu2024modprepresentationsquaternion} a certain category $\mathcal C$ of mod $p$ representations $\pi$ of the bigger group $G_\text{big}$ ($\GL_2K$ or $D^\times$ depending on which case we are in) is introduced whose definition is given in terms of certain invariants that can be attached to $\pi|_G$\footnote{The restriction to $G$ will make sense because the center by which one mods out to obtain $G$ will always act trivially on these $\pi$ (as they will be required to have a central character).}. This category is of interest in the mod $p$ Langlands program.\\ 
	We remind the reader that people so far have not been able to construct a mod $p$ Langlands further than for $\GL_2\mathbf Q_p$. A significant obstacle is that as soon as $f>1$, the smooth admissible mod $p$ representations of $\GL_2K$ become much more numerous than the representations on the Galois-side. One of the challenges is therefore to identify what representations of $\GL_2K$ can be in the image of a conjectured mod $p$ Langlands correspondence. The category $\mathcal C$ provides a step in this direction.\\ 
	Indeed, in the $\GL_2$-case the category $\mathcal C$ is known to contain the candidates $\pi_{\overline\rho}$ for a conjectured mod $p$ Langlands correspondence $\overline\rho\mapsto\pi_{\overline\rho}$ from mod $p$ $\Gal(\overline{\mathbf Q}_p/K)$-representations to mod $p$ $\GL_2K$-representations: inspired by Emerton's local-global compatibility one can construct such candidates by global methods, via the mod $p$ cohomology of Shimura varieties, and provided $\overline\rho$ satisfies sufficient genericity assumptions it is proven in \cite{BHHMS1} that the resulting mod $p$ representation of $\GL_2K$ is in $\mathcal C$ (see the proof of Thm.~5.3.5 in \emph{loc.~cit.} and their Thm.~6.4.7, see also \cite[Thm.~5.1]{wang2023modpcohomologyoperatornamegl2}).\\
	In the $D^\times$-case the category $\mathcal C$ was considered in the paper \cite{hu2024modprepresentationsquaternion}, a work motivated by the search for a mod $p$ Jacquet-Langlands correspondence. In the case $f=1$ the authors similarly prove that mod $p$ representations of $D^\times$ coming from a sufficiently generic mod $p$ representation of $\Gal(\overline{\mathbf Q}_p/\mathbf Q_p)$, via a construction using the cohomology of a suitable Shimura variety, is in $\mathcal C$ (the quaternion version this time), see Thm.~6.14 of \emph{loc.~cit.}.\\
	The category $\mathcal C$ is still mysterious and far from well-understood. Let us remark for example that $\pi$ being in $\mathcal C$ implies that its \emph{Gelfand-Kirillov dimension} is $\leq f$ (see the proof of \cite[Thm.~5.3.5]{BHHMS1}), but that it is unknown whether the converse holds.\\
	
	The main purpose of this note is to shed some more light on the category $\mathcal C$ by proving a property that might be useful in its further study. Namely, we prove that whether a $\pi$ is in $\mathcal C$ can be detected on the restriction of $\pi$ to an arbitrarily small open subgroup. More precisely, we prove the following theorem:
	\begin{theorem}\label{main}
		Let $\pi$ and $\pi'$ be smooth representations of $G_\text{big}$ ($\GL_2K$ or $D^\times$) with same fixed central character. Suppose $\pi'\in\mathcal C$ and $\pi|_H\cong\pi'|_H$ for some open subgroup $H$ of $G_\text{big}$. Then $\pi\in\mathcal C$.
	\end{theorem}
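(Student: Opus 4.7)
The strategy would be to reduce to the case where $H$ lies inside the pro-$p$ group $G$ and then to show that the invariants characterising membership in $\mathcal C$ depend only on the restriction to such an open subgroup.

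First I would reduce to the case $H \subseteq I_1$ (resp.\ $U_1$). Intersecting $H$ with $I_1$ (resp.\ $U_1$) yields an open subgroup; since $Z_1$ is pro-$p$, every smooth mod $p$ character of $Z_1$ is trivial, and combined with the equality of central characters this forces $Z_1$ to act trivially on both $\pi$ and $\pi'$. The actions of $H \cap I_1$ (resp.\ $H \cap U_1$) then factor through the image $\bar H \subseteq G$, which is open, and $\pi|_{\bar H} \cong \pi'|_{\bar H}$ as $\bar H$-representations.

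The second step would be to unpack the invariants defining $\mathcal C$. These are attached to $\pi|_G$ viewed as a module over the Iwasawa algebra $\mathbf F[[G]]$ and, more refinedly, through the graded pieces $\gres \pi$ and $\grint \pi$ with respect to the $N$-filtration. For any open $\bar H \subseteq G$, the algebra $\mathbf F[[G]]$ is finitely generated and free as a module over $\mathbf F[[\bar H]]$; this is what makes the elementary invariants (admissibility, finite generation, canonical/Gelfand--Kirillov dimension, multiplicity) transparently preserved under restriction, so that they can be transferred from $\pi'$ to $\pi$.

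For the more delicate structural conditions, the idea is to shrink $\bar H$ further to a subgroup appearing in the $N$-filtration on $G$ itself. Then $\gr \mathbf F[[\bar H]]$ sits inside $\gr \mathbf F[[G]]$ in a controlled way (a graded subalgebra, up to a shift in the grading), and the graded modules $\gres \pi|_{\bar H}, \grint \pi|_{\bar H}$ can be compared directly to $\gres \pi|_G, \grint \pi|_G$. This should let one transfer each structural condition from $\pi'|_{\bar H}$ to $\pi|_{\bar H}$, and so conclude that $\pi \in \mathcal C$.

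I expect the main obstacle to be this last point: the precise structural conditions used in \cite{BHHMSConjs} and \cite{hu2024modprepresentationsquaternion} to single out $\mathcal C$ (specific cyclicity, multiplicity and finite generation properties over prescribed subalgebras of $\gr \mathbf F[[G]]$) need to be matched carefully with a restriction-invariant version. The numerical part is routine once the reduction is in place; the structural part requires a judicious choice of $\bar H$ inside the $N$-filtration and careful bookkeeping of the induced filtrations and gradings.
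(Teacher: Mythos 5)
Your high-level strategy is the right one: reduce to the pro-$p$ group $G$, shrink further to a subgroup of the form $G^{p^N}$, and try to re-express the defining condition of $\mathcal C$ in terms of data depending only on $\pi|_{G^{p^N}}$. That is exactly the shape of the paper's argument. But as it stands your proposal has two genuine gaps.

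First, you are working with the wrong characterization of $\mathcal C$. You describe it via a list of conditions (``cyclicity, multiplicity, finite generation over prescribed subalgebras''); in this paper, $\mathcal C$ is cut out by a \emph{single} annihilation condition: $\gr\pi^\vee$ must be killed by a power of a specific two-sided ideal $J_{\mathcal C}\subset\gr\mathbf F\llbracket G\rrbracket$ (the paper's Warning explicitly distinguishes this from the larger category denoted ``$\mathcal C$'' in \cite{BHHMSConjs}). Your remark that finite freeness of $\mathbf F\llbracket G\rrbracket$ over $\mathbf F\llbracket G^{p^N}\rrbracket$ ``transparently'' transfers elementary invariants is true but beside the point; none of the elementary invariants you list is the condition that needs to be moved across the restriction.

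Second, and more seriously, the step you yourself flag as the ``main obstacle'' is not routine bookkeeping — it is the whole proof, and you have not supplied it. Concretely, one must show that ``$\gr\pi^\vee$ killed by a power of $J_{\mathcal C}$'' is equivalent to ``$\gres\pi^\vee$ killed by a power of the Frobenius-twisted ideal $J_N\subset\gr_\nu\mathbf F\llbracket G^{p^N}\rrbracket$'', where $\gres\pi^\vee$ is the associated graded for the filtration $\mathfrak n_i\pi^\vee$ that visibly depends only on $\pi|_{G^{p^N}}$. This requires (a) replacing $J_{\mathcal C}$ by $J_N$, which uses the commutativity of $\gr\mathbf F\llbracket G\rrbracket/(c_0,\dots,c_{f-1})$ to identify $f_i^{p^N}$ with $\tilde f_i$ modulo $\mathfrak c$; (b) passing from the $\mathfrak m_G$-adic graded $\gr$ to $\grint$ by a floor-function rescaling argument; and (c), crucially, the two-sided comparison $\mathfrak n_k\mathbf F\llbracket G\rrbracket\subset\mathfrak m_G^{kp^N}\subset\mathfrak n_{k-4f}\mathbf F\llbracket G\rrbracket$, whose proof rests on the centrality of $a_i^{p^N},b_i^{p^N},c_i^{p^N}$ in $\gr\mathbf F\llbracket G\rrbracket$ and on the closedness of $\mathfrak n_{k-4f}\mathbf F\llbracket G\rrbracket$ (a finitely generated right module) to terminate the rearrangement. ``Up to a shift in the grading'' does not capture this: the shift is bounded ($4f$) only because of the centrality of $p$-th powers, and without that noncommutative input the comparison would fail. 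Your reduction to $G^{p^N}$ and your instinct that the proof is a filtration comparison are correct; what is missing is the actual comparison.
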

	\begin{warning}
		We warn the reader that what we call the category $\mathcal C$ in the $\GL_2$-case is actually not known to be the same as the category denoted by ``\hspace{0.25em}$\mathcal C$'' in \cite{BHHMSConjs}. We only know it is a full subcategory thereof, which is explained by the paragraph at the top of page 115 of \emph{loc.~cit.}. However, it is suspected in \cite[Rem.~3.3.2.6(ii)]{BHHMSConjs} that the two categories are the same.
	\end{warning}
	We now give some details on the definition of the category $\mathcal C$. In both cases the category $\mathcal C$ is a full subcategory of the category of smooth admissible representations of $G_\text{big}$ over $\mathbf F$ that have a fixed central character (which we abusively omit from the notation).\\
	An object $\pi$ of this ambient category has $Z_1$ acting trivially on it, because $Z_1$ is pro-$p$ and central and acting via a character to $\mathbf F^\times$, which is therefore necessarily trivial. This means $\pi$ is also a representation of $G$. Its dual $\pi^\vee$ is then a pseudocompact $\mathbf F\llbracket G\rrbracket$-module, where $\mathbf F\llbracket G\rrbracket$ is the completed group ring, i.e. the limit of all group rings over $\mathbf F$ of quotients of $G$ by normal open subgroups.\\
	Since $G$ is pro-$p$, the completed group ring is local, having a unique (two-sided) maximal ideal $\mathfrak m_G$. One can form the associated graded ring $\gr\mathbf F\llbracket G\rrbracket\coloneqq\bigoplus_{i\geq0}\mathfrak m_G^i/\mathfrak m_G^{i+1}$ which acts on $\gr\pi^\vee\coloneqq\bigoplus_{i\geq0}\mathfrak m_G^i\pi^\vee/\mathfrak m_G^{i+1}\pi^\vee$, making the latter into a graded module.\\
	Next, a certain two-sided ideal $J_{\mathcal C}\subset\gr\mathbf F\llbracket G\rrbracket$ is identified: in the $\GL_2$-case it is the ideal $I_G$ defined just before \cite[Thm.~5.3.4]{BHHMS1} and in the quaternion case it is the ideal $I_D$ defined just before \cite[Cor.~2.10]{hu2024modprepresentationsquaternion}.\\
	The definition of $\mathcal C$ is now straightforward: its objects are all $\pi$ for which $\gr\pi^\vee$ is killed by a power of $J_{\mathcal C}$. It turns out to be an Abelian subcategory of the smooth admissible representations with the fixed central character and it is stable under subquotients and extensions.\\
	
	Let us briefly say a few words about our strategy. It is roughly to associate to any $\pi|_H$ some graded module ($\gres\pi|_H^\vee$ in notation that will make sense later) over some graded subalgebra of $\gr\mathbf F\llbracket G\rrbracket$ and to show that $\pi\in\mathcal C$ is equivalent to this graded module being killed by some power of a certain ideal.\\
	We will see that we can define the $\mathfrak m_G$-adic filtration on $\pi^\vee$ giving rise to $\gr\pi^\vee$ in a language that is more amenable to restricting to subgroups. Namely, we can define the $\mathfrak m_G$-adic filtration in terms of $p$-valuations and ordered bases. The relevant concepts for this will be introduced in the next two sections. Afterwards, we define the new association of graded modules $\gres(-)$. In order to prove Theorem \ref{main} we then establish a series of lemmas that phrase the condition of being in $\mathcal C$ in terms of the new graded modules and conclude.\\
	We note that we do not need to know the precise description of the ideal $J_{\mathcal C}$. All that matters is that $\gr\mathbf F\llbracket G\rrbracket/J_{\mathcal C}$ is commutative. Therefore our Theorem \ref{main} generalizes to any category $\mathcal C'$ defined in the same way but with respect to a different such ideal $J_{\mathcal C'}$.
		\subsection*{Acknowledgments}
	The author is grateful to Florian Herzig and Benjamin Schraen as well as to an anonymous referee for their comments and corrections on earlier versions of this article.
	\section{$p$-valuations}
	We recall the definition of a $p$-valuation.
	\begin{definition}
		Let $H$ be a group. A \emph{$p$-valuation} on $H$ is a map $$\omega\colon H\to(\tfrac1{p-1},\infty]$$ such that for all $x,y\in H$\begin{itemize}
			\item$\omega(xy^{-1})\geq\min\{\omega(x),\omega(y)\}$,
			\item$\omega([x,y])\geq\omega(x)+\omega(y)$,
			\item$\omega(x)=\infty$ if and only if $x$ is trivial,
			\item$\omega(x^p)=\omega(x)+1$.
		\end{itemize}
	\end{definition}
	One can use a $p$-valuation to define a topology on $H$, using the inverse images of opens around $\infty$ to get a system of open neighbourhoods around the identity element of $H$.\\
	In our case, it turns out there is a $p$-valuation $\omega$ on $G$ which induces its topology. Moreover, this $p$-valuation is \emph{saturated}, which means that for all $x$, $\omega(x)>\frac p{p-1}$ implies $\exists y\in G\colon y^p=x$.\\
	One of the important consequences of the saturatedness of $G$ is that the sets $$G^{p^N}\coloneqq\{g^{p^N}\mid g\in G\},$$ are open subgroups of $G$ for all $N\geq0$. Since $\omega$ defines the topology of $G$ the groups $G^{p^N}$ get arbitrarily small as $N\to\infty$. For our purposes it therefore suffices to work with the $G^{p^N}$.\\
	
	In the $\GL_2$-case, see \cite[\S5.3]{BHHMS1} for more details and in particular the construction of $\omega$. For more general details on $p$-valuations, see \cite[Ch.~V]{Schneider2011}.\\ We give some more details on $\omega$ in the quaternion case, as this is not in \cite{hu2024modprepresentationsquaternion}. This $p$-valuation is constructed via the $p$-adic valuation $v_p$ on $D$, normalized so that $v_p(p)=1$, and via the reduced norm (actually we do it even more down-to-earth). Namely, one can consider the map from $D$ to $(-\infty,\infty]$ sending $x$ to $v_p(x-1)$. It seems a good candidate, were it not that $G$ is not (evidently) a subgroup of $D^\times$ and we cannot simply restrict.\\
	Fortunately, this can be resolved by realizing that $G$ naturally is a subgroup of $D^\times$: indeed, after choosing a basis, we get a $K$-algebra morphism $D\to\text{Mat}_{4\times4}K$; from the target we have the determinant morphism, and the image of $U_1=1+\Pi\mathcal O_D$ lands in $Z_1=1+p\mathcal O_K$ on which we have the morphism $\sqrt[4]-$; lastly, the composition gives us a retract $U_1\to Z_1$ of the inclusion of the center allowing us to write $G\times Z_1\cong U_1\subset D^\times$.\\
	Then we can continue our strategy, and define $\omega$ on $G$ by $\omega(x)\coloneqq v_p(x-1)$ via the identification as a subgroup of $D^\times$. One checks that this indeed defines a $p$-valuation, using $p>3$. Since the $p$-powers in $1+\Pi\mathcal O_D$ are precisely $1+p\Pi\mathcal O_D$, it follows that $\omega$ is saturated. Also, it is immediate from the definition that the topology induced by $\omega$ is the one of $G$.
	\section{Ordered basis}
	A nice property of $G$ is that it has an \emph{ordered basis}. That is, there are $g_1,\dots,g_n\in G$ such that the map $$\mathbf Z_p^n\to G,(x_1,\dots,x_n)\mapsto g_1^{x_1}\cdot\cdots\cdot g_n^{x_n}$$ is a homeomorphism. Moreover, one has $\omega(g_1^{x_1}\cdot\cdots\cdot g_n^{x_n})=\min_i\omega(g_i)+v_p(x_i)$. The number $n$, which is called the \emph{rank} of $G$, equals its dimension and is therefore $3f$. See \cite[\S26]{Schneider2011} for more information about ordered bases.\\
	We obtain an ordered basis by letting $(g_1,\dots,g_{3f})=(A_0,\dots,A_{f-1},B_0,\dots,B_{f-1},C_0,\dots,C_{f-1})$, where \begin{align*}
		&A_i=\begin{cases}
			\begin{pmatrix}
				1&[\alpha^i]\\0&1
			\end{pmatrix}&\text{in the $GL_2$-case,}\\
			1+[\alpha^i]\Pi&\text{in the quaternion case,}
		\end{cases}\\
		&B_i=\begin{cases}
			\begin{pmatrix}
				1&0\\{[\alpha^i]}p&1
			\end{pmatrix}&\text{in the $GL_2$-case,}\\
			1+[\alpha^i][\zeta]\Pi&\text{in the quaternion case,}
		\end{cases}\\
		&C_i=\begin{cases}
			\begin{pmatrix}
				1+[\alpha^i]p&0\\0&(1+[\alpha^i]p)^{-1}
			\end{pmatrix}&\text{in the $GL_2$-case,}\\
			1+[\alpha^i][\zeta]p&\text{in the quaternion case,}
		\end{cases}
	\end{align*}
	here $[-]$ denotes the Teichm\"uller lift, $\alpha$ is a generator of $\mathbf F_{p^f}$ over $\mathbf F_p$, and $\zeta$ generates the quadratic extension of $\mathbf F_{p^f}$ over $\mathbf F_{p^f}$. In the $\GL_2$-case see the proof of \cite[Prop.~5.3.3]{BHHMS1} for a proof; in the quaternion case combine \cite[Prop.~2.1]{hu2024modprepresentationsquaternion} (but mod out by the center) with \cite[Prop.~26.5]{Schneider2011}.\\ We note that $\omega(A_i)=\omega(B_i)=\frac12$ and $\omega(C_i)=1$ for all $0\leq i\leq f-1$.\\
	
	The ordered basis gives us the following description of the completed group ring: \begin{equation}\label{complgrpring}
		\mathbf F\llbracket G\rrbracket=\left\{\sum_{\underline x\geq0}c_{\underline x}z^{\underline x}\colon c_{\underline x}\in\mathbf F,\text{ where }z^{\underline x}\coloneqq(g_1-1)^{x_1}\cdot\cdots\cdot(g_{3f}-1)^{x_{3f}}\right\}.
	\end{equation} Note that elements have unique such power series expansions, because $\mathbf F\llbracket G\rrbracket$ is isomorphic to $\mathbf F\llbracket\mathbf Z_p^{3f}\rrbracket\cong\mathbf F\llbracket X_1,\dots,X_{3f}\rrbracket$ as a topological module via the homeomorphism $\mathbf Z_p^{3f}\to G$ given by the ordered basis.\\
	We define a map $\nu\colon\mathbf F\llbracket G\rrbracket\to\mathbf Z_{\geq0}\cup\{\infty\}$ by $\nu\coloneqq2\overline w$ where $$\overline w\left(\sum_{\underline x\geq0}c_{\underline x}z^{\underline x}\right)\coloneqq\inf\left\{\sum_{i=1}^{3f}x_i\omega(g_i)\colon c_{\underline x}\neq0\right\}.$$ One can check that it is a valuation. It defines a filtration none other than the $\mathfrak m_G$-adic one:
	\begin{proposition}\label{maxideals}
		For $j\in\mathbf Z_{\geq0}$ we have $$\mathfrak m_G^j=\{x\in\mathbf F\llbracket G\rrbracket\colon\nu(x)\geq j\}.$$
	\end{proposition}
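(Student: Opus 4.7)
The plan is to establish both inclusions of the claimed equality. Let $F^j := \{x \in \mathbf{F}\llbracket G\rrbracket : \nu(x) \geq j\}$; since $\nu$ is a valuation (as one verifies using (\ref{complgrpring}) and the ordered-basis formula for $\omega$), $F^\bullet$ is a decreasing filtration by two-sided ideals with $F^j F^k \subseteq F^{j+k}$. The first identification is $F^1 = \mathfrak{m}_G$: both equal the kernel of the augmentation $\sum c_{\underline x} z^{\underline x} \mapsto c_{\underline 0}$, because the minimum positive value of $\nu$ on monomials $z^{\underline x}$ is exactly $1$ (since $2\omega(g_i)\in\{1,2\}$ for every basis element). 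The inclusion $\mathfrak{m}_G^j \subseteq F^j$ then follows by induction, using $F^1 \cdot F^{j-1} \subseteq F^j$.

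The reverse inclusion $F^j \subseteq \mathfrak{m}_G^j$ is the substantive part. By convergence of the power-series expansion (\ref{complgrpring}) in the $\mathfrak{m}_G$-adic topology (and the closedness of $\mathfrak{m}_G^j$), it suffices to prove $z^{\underline x} \in \mathfrak{m}_G^{\nu(z^{\underline x})}$ for every monomial. Since $A_i - 1$ and $B_i - 1$ lie trivially in $\mathfrak{m}_G$ (and $2\omega(A_i)=2\omega(B_i)=1$), everything reduces to the sub-claim that $C_i - 1 \in \mathfrak{m}_G^2$ for each $i$.

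I expect this sub-claim to be the main obstacle. My approach is to exhibit $C_i$ as an element of the closed subgroup $[G,G]\,G^p \subseteq G$ generated by commutators and $p$-th powers. Indeed $1 + \mathfrak{m}_G^2$ is a subgroup of $\mathbf{F}\llbracket G\rrbracket^\times$ containing every commutator $[g,h]$ of elements of $G$ (write $g = 1+a$, $h = 1+b$ with $a,b \in \mathfrak{m}_G$ and expand; the linear terms cancel, giving $[g,h] - 1 \in \mathfrak{m}_G^2$) and every $p$-th power $g^p$ (the binomial coefficients $\binom{p}{k}$ vanish in $\mathbf{F}$ for $1 \leq k \leq p-1$, so $g^p - 1 \in \mathfrak{m}_G^p \subseteq \mathfrak{m}_G^2$). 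Once $C_i \in [G,G]\,G^p$ is established, the desired containment follows. To verify this last membership I would compute a specific commutator such as $[A_j, B_k]$ directly in $G$, finding that up to elements of $\omega$-value $\geq 3/2$ it equals $C_\ell$ for a suitable index $\ell$; since $p > 3$ implies $p/(p-1) < 3/2$, the correction terms are $p$-th powers in $G$ by saturatedness of $\omega$, and one iterates.
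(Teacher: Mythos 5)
Your proposal follows essentially the same route as the paper: the easy inclusion $\mathfrak m_G^j\subseteq F^j$ via multiplicativity of $\nu$, reduction of the reverse inclusion (using (\ref{complgrpring}) and closedness of $\mathfrak m_G^j$) to showing $C_i-1\in\mathfrak m_G^2$, and finally $C_i\in[G,G]G^p$ via a commutator computation --- this is exactly the paper's proof (it cites \cite[Prop.~5.3.3]{BHHMS1} in the $\GL_2$-case and carries out the analogous commutator calculation in the quaternion case). One small caveat about the last step: a single $[A_j,B_k]$ need not equal a single $C_\ell$ up to higher $\omega$-value terms (in the quaternion case it produces $[\alpha^{j+k}]([\zeta^{p^f}]-[\zeta])$ rather than $[\alpha^\ell][\zeta]$), so one must take products of such commutators, or equivalently pick $\gamma\in\mathcal O_K$ not necessarily of Teichm\"uller form as the paper does via the $1$-dimensionality of $(1+p\mathcal O_D)/\big((1+p\Pi\mathcal O_D)(1+p\mathcal O_K)\big)\cong k[\zeta]/k$ over $k$; otherwise your plan is sound, and your observation that $1+\mathfrak m_G^2$ is a subgroup of $\mathbf F\llbracket G\rrbracket^\times$ absorbing all commutators and $p$-th powers is a clean packaging of the paper's more ad hoc congruences.
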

	\begin{proof}
		In the $\GL_2$-case this is precisely \cite[Prop.~5.3.3]{BHHMS1}. The proof in the quaternion case is very similar: the $\subset$-part is proven by remarking that the case for general $j$ follows from the case for $j=1$ (which is obvious) together with $\nu(xy)=\nu(x)+\nu(y)$; for the $\supset$-part it suffices by (\ref{complgrpring}) to prove $C_i-1\in\mathfrak m_G^2$ and for this it suffices to show $C_i\in[G,G]G^p$ as in \emph{loc.~cit.}. Indeed, modulo $\mathfrak m_G^2$ one has $(gh-1)=g-1+h-1$ for $g,h\in G$ and therefore any $g$ in $[G,G]G^p$ satisfies $g-1\in\mathfrak m_G^2$ (to deal with the $p$-powers, simply notice that $g^p-1=(g-1)^p\in\mathfrak m_G^2$). To show $C_i\in[G,G]G^p$ now, remark first that for $\gamma\in\mathcal O_K$ one has $$[(1+[\zeta]\Pi),(1+\gamma\Pi)]=1+\gamma([\zeta]-[\zeta^{p^f}])p\mod p\Pi\mathcal O_D,$$ which means precisely that both sides of the equation are the same up to $p$-powers in $1+\Pi\mathcal O_D$.\\
		It suffices to find a $\gamma\in\mathcal O_K$ for which the class of $1+\gamma([\zeta]-[\zeta^{p^f}])p$ in $(1+p\mathcal O_D)/\left((1+p\Pi\mathcal O_D)(1+p\mathcal O_K)\right)$ is the same as that of $C_i$. Since $(1+p\mathcal O_D)/(1+p\Pi\mathcal O_D)\cong k[\zeta]$ and since $\zeta-\zeta^{p^f}\not\in k$ there is indeed such a $\gamma$, as $(1+p\mathcal O_D)/\left((1+p\Pi\mathcal O_D)(1+p\mathcal O_K)\right)\cong k[\zeta]/k$ is $1$-dimensional over $k$.
	\end{proof}
	We also note that the choice of ordered basis for $G$ gives us canonical ordered bases for the groups $G^{p^N}$, $N\geq1$:
	\begin{proposition}
		$G^{p^N}$ has an ordered basis given by $\left(g_1^{p^N},\dots,g_{3f}^{p^N}\right)$.
	\end{proposition}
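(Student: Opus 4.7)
The natural candidate for the ordered-basis homeomorphism is
$$\psi\colon\mathbf Z_p^{3f}\to G,\quad(y_1,\dots,y_{3f})\mapsto(g_1^{p^N})^{y_1}\cdots(g_{3f}^{p^N})^{y_{3f}}.$$
Since $(g_i^{p^N})^{y_i}=g_i^{p^N y_i}$, the map $\psi$ equals $\phi\circ\mu_{p^N}$, where $\phi\colon\mathbf Z_p^{3f}\xrightarrow{\sim}G$ is the ordered-basis homeomorphism of $G$ and $\mu_{p^N}$ denotes multiplication by $p^N$ on $\mathbf Z_p^{3f}$. Hence $\psi$ is automatically a topological embedding with image $\phi(p^N\mathbf Z_p^{3f})$, and the ordered-basis $p$-valuation formula
$$\omega(\psi(\underline y))=\min_i\bigl(\omega(g_i^{p^N})+v_p(y_i)\bigr)$$
follows at once from the analogous formula for $\phi$ combined with the identity $\omega(g_i^{p^N})=\omega(g_i)+N$.

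The only substantive task is therefore to identify $\mathrm{Im}(\psi)$ with $G^{p^N}$, equivalently $\phi^{-1}(G^{p^N})=p^N\mathbf Z_p^{3f}$. The inclusion $\supseteq$ is immediate: each factor $(g_i^{p^N})^{y_i}=(g_i^{y_i})^{p^N}$ is a $p^N$-th power, and $G^{p^N}$ is a subgroup thanks to saturatedness of $\omega$. For $\subseteq$, take $h=g^{p^N}\in G^{p^N}$ and expand $h=\prod_i g_i^{\tilde x_i}$ via $\phi^{-1}$. The ordered-basis formula for $\phi$ then gives
$$\min_i\bigl(\omega(g_i)+v_p(\tilde x_i)\bigr)=\omega(h)=\omega(g)+N\geq\min_j\omega(g_j)+N=\tfrac12+N,$$
so $v_p(\tilde x_i)\geq\tfrac12+N-\omega(g_i)$ for every $i$.

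Since in our setting $\omega(g_i)\in\{\tfrac12,1\}$ and $v_p(\tilde x_i)\in\mathbf Z_{\geq0}\cup\{\infty\}$ is integral, the estimate rounds up to $v_p(\tilde x_i)\geq N$ in both cases, yielding $\underline{\tilde x}\in p^N\mathbf Z_p^{3f}$ as required. The only delicate point is this rounding step: it works precisely because $\max_i\omega(g_i)-\min_j\omega(g_j)=\tfrac12<1$ in our situation, so the fractional slack in the estimate is absorbed by integrality of $v_p$. Everything else is formal given the ordered basis of $G$ and saturatedness of $\omega$ already recorded above.
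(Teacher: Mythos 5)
Your proposal is correct and follows essentially the same route as the paper's proof, which simply asserts that $G^{p^N}=\phi(p^N\mathbf Z_p^{3f})$ is an ``immediate consequence'' of saturatedness and the ordered-basis formula $\omega(g_1^{x_1}\cdots g_n^{x_n})=\min_i(\omega(g_i)+v_p(x_i))$. Your rounding argument (valid because $\max_i\omega(g_i)-\min_j\omega(g_j)=\tfrac12<1$, a bound which in fact holds for any ordered basis of a saturated $p$-valued group since the $\omega(g_i)$ lie in $(\tfrac1{p-1},\tfrac p{p-1}]$) is precisely the step the paper leaves to the reader.
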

	\begin{proof}
		This follows from $G^{p^N}$ being the image of $p^N\mathbf Z_p^{3f}$ under the homeomorphism $\mathbf Z_p^{3f}\to G$, which is an immediate consequence of the saturatedness of $\omega$ and the formula $\omega(g_1^{x_1}\cdot\cdots\cdot g_n^{x_n})=\min_i\omega(g_i)+v_p(x_i)$.
	\end{proof}
	Consequently \begin{equation}\label{groupringpN}\mathbf F\llbracket G^{p^N}\rrbracket=\left\{\sum_{\underline x\geq0}c_{\underline x}z^{\underline x}\colon \underline x\in p^N\mathbf Z^{3f}\right\}\subset\mathbf F\llbracket G\rrbracket,\end{equation} using the same notation as in (\ref{complgrpring}).
	\section{Associated graded rings and modules}
	From now on we fix an $N\geq1$.\\
	
	Restricting $\nu$ to $\mathbf F\llbracket G^{p^N}\rrbracket$ induces a filtration with associated graded algebra $$\gr_\nu\mathbf F\llbracket G^{p^N}\rrbracket\coloneqq\bigoplus_{i\geq0}\mathbf F\llbracket G^{p^N}\rrbracket_{\nu\geq i}/\mathbf F\llbracket G^{p^N}\rrbracket_{\nu>i}.$$
	By Proposition \ref{maxideals} it is a graded subalgebra of $\gr\mathbf F\llbracket G\rrbracket$. By (\ref{groupringpN}) it is actually $p^N\mathbf Z$-graded. That is, the quotient $$\mathbf F\llbracket G^{p^N}\rrbracket_{\nu\geq i}/\mathbf F\llbracket G^{p^N}\rrbracket_{\nu>i}$$ is only non-zero for $i$ of the form $kp^N$ and, in fact, then  one has $$\mathbf F\llbracket G^{p^N}\rrbracket_{\nu>kp^N}=\mathbf F\llbracket G^{p^N}\rrbracket_{\nu\geq(k+1)p^N}.$$
	If $M$ is a $\mathbf F\llbracket G\rrbracket$-module, we can now associate to it the following three graded $\gr_\nu\mathbf F\llbracket G^{p^N}\rrbracket$-modules:
	\begin{enumerate}[label=\alph*)]
		\item $\gr M\coloneqq\bigoplus_{i\geq0}\mathfrak m_G^iM/\mathfrak m_G^{i+1}M$, which is a graded $\gr_\nu\mathbf F\llbracket G^{p^N}\rrbracket$-module via its $\gr\mathbf F\llbracket G\rrbracket$-module structure;
		\item $\grint M\coloneqq\bigoplus_{i\geq0}\mathfrak m_G^{ip^N}M/\mathfrak m_G^{(i+1)p^N}M$, naturally a graded $\gr_\nu\mathbf F\llbracket G^{p^N}\rrbracket$-module because $\mathfrak m_G^{ip^N}\cap\mathbf F\llbracket G^{p^N}\rrbracket=\mathbf F\llbracket G^{p^N}\rrbracket_{\nu\geq ip^N}$;
		\item $\gres M\coloneqq\bigoplus_{i\geq0}\mathfrak n_iM/\mathfrak n_{i+1}M$, where $\mathfrak n_i\coloneqq\mathbf F\llbracket G^{p^N}\rrbracket_{\nu\geq ip^N}$, naturally a graded $\gr_\nu\mathbf F\llbracket G^{p^N}\rrbracket$-module.
	\end{enumerate}
	We write ``$\mathfrak n_i$'' even though residue classes of its elements will be considered in degree $ip^N$.\\
	
	As already explained, the condition for $\pi$ to be in $\mathcal C$ is in terms of $\gr\pi^\vee$, which depends a priori not just on $\pi|_{G^{p^N}}$. We will show that the annihilation condition that $\gr\pi^\vee$ should satisfy to be in $\mathcal C$ can be equivalently phrased in terms of the $\gr_\nu\mathbf F\llbracket G^{p^N}\rrbracket$-module $\gres\pi^\vee$, which is a module clearly depending only on the restriction of $\pi$ to $G^{p^N}$. The graded module $\grint\pi^\vee$ will be used as an intermediary step in our proof of this equivalent formulation.
	\section{The annihilation condition}
	The classes of the $A_i-1$ and $B_i-1$ in $\gr\mathbf F\llbracket G\rrbracket$ form a basis of $\mathfrak m_G/\mathfrak m_G^2$ and they already generate $\gr\mathbf F\llbracket G\rrbracket$ as an $\mathbf F$-algebra. In fact, the $\mathbf F$-span of the classes of the $C_i-1$ in $\mathfrak m_G^2/\mathfrak m_G^3$ is precisely the image of the commutator bracket $[-,-]\colon\gr^1\mathbf F\llbracket G\rrbracket\times\gr^1\mathbf F\llbracket G\rrbracket\to\gr^2\mathbf F\llbracket G\rrbracket$. Let us denote these classes by $a_i,b_i,$ and $c_i$, respectively. These observations already yield the first part of following proposition:
	\begin{proposition}[{\cite[Thm.~5.3.4]{BHHMS1}},{\cite[Thm.~2.6]{hu2024modprepresentationsquaternion}}]
		The quotient $\gr\mathbf F\llbracket G\rrbracket/(c_0,\dots,c_{f-1})$ is isomorphic to the polynomial ring $\mathbf F[a_0,b_0,\dots,a_{f-1},b_{f-1}]$ and the sequence  $(c_0,\dots,c_{f-1})$ is a regular sequence of central elements in $\gr\mathbf F\llbracket G\rrbracket$.
	\end{proposition}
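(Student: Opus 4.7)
The plan is as follows: the first assertion (polynomial-ring structure modulo the $c_i$) is essentially assembled from the observations stated just above the proposition combined with a dimension count, so I will concentrate on the second assertion (centrality of the $c_i$), where the real technical content lies.

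For the first part, I would identify the $\mathbf F$-basis of $\gr^k\mathbf F\llbracket G\rrbracket$ given by the classes of the monomials $z^{\underline x} = (g_1-1)^{x_1}\cdots(g_{3f}-1)^{x_{3f}}$ with $\sum_i x_i\cdot 2\omega(g_i) = k$; this follows from (\ref{complgrpring}) and Proposition \ref{maxideals}. We are told that $a_i,b_i$ generate $\gr\mathbf F\llbracket G\rrbracket$ as an $\mathbf F$-algebra and that $[a_i,b_j]$, $[a_i,a_j]$, $[b_i,b_j]$ all lie in the $\mathbf F$-span of the $c_k$, so modulo $(c_0,\dots,c_{f-1})$ the classes $a_i,b_j$ pairwise commute. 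This yields a surjection $\mathbf F[a_0,\dots,b_{f-1}]\twoheadrightarrow\gr\mathbf F\llbracket G\rrbracket/(c_0,\dots,c_{f-1})$, and restricting the above basis to those $z^{\underline x}$ with $x_{C_j}=0$ for all $j$ shows both sides have the same graded dimensions, so the surjection is an isomorphism.

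For centrality, since the $a_j,b_j$ generate $\gr\mathbf F\llbracket G\rrbracket$ as an $\mathbf F$-algebra, it suffices to check $[c_i, a_j] = [c_i, b_j] = 0$ in the graded ring. Fix $g \in \{A_j, B_j\}$, so $\omega(g) = 1/2$. The commutator axiom for a $p$-valuation gives $\omega([C_i, g])\geq \omega(C_i)+\omega(g)=3/2$ (group commutator), and since $p > 3$ one has $p/(p-1) \leq 5/4 < 3/2$, so by saturation $[C_i, g] = h^p$ for some $h \in G$. In characteristic $p$ the identity $h^p - 1 = (h-1)^p$ holds in $\mathbf F\llbracket G\rrbracket$, hence $\nu([C_i, g]-1) \geq p\cdot\nu(h-1) \geq p$. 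Using the group-ring identity $C_ig - gC_i = gC_i\cdot([C_i, g] - 1)$ together with $\nu(gC_i)=0$ (the constant term in the $z^{\underline x}$-expansion of a group element is $1$), one obtains $\nu(C_ig - gC_i)\geq p\geq 5$, strictly greater than $3 = \nu(C_i-1) + \nu(g-1)$, the degree in which $[c_i, a_j]$ (respectively $[c_i, b_j]$) lives. Hence these commutators vanish.

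The main obstacle is the bound $\nu([C_i, g] - 1) \geq p$, which depends essentially on saturation of $\omega$ and on $p > 3$; the direct estimate $\nu \geq 2\omega \geq 3$ coming from the $p$-valuation axioms only matches the target degree and would not suffice. For $p = 3$ we would have $\omega([C_i, A_j]) \geq 3/2 = p/(p-1)$ sitting exactly at the saturation boundary, and the argument would collapse. An additional routine verification, uniform in both the $\GL_2$ and the quaternion cases, is that the $A_i$ pairwise commute and the $B_j$ pairwise commute in $G$; in the quaternion case this is a short computation using $\Pi^2 = p$ and the relation $\Pi a = a \Pi$ for $a \in K$.
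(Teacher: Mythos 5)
This proposition is cited from \cite[Thm.~5.3.4]{BHHMS1} and \cite[Thm.~2.6]{hu2024modprepresentationsquaternion}; the paper gives no proof of its own, so I assess your argument on its own terms rather than against an in-paper proof.

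Your centrality argument is correct and self-contained. The chain $\omega([C_i,g])\geq\omega(C_i)+\omega(g)=3/2$, the strict inequality $3/2>p/(p-1)$ for $p>3$ feeding into saturation, $h^p-1=(h-1)^p$ in characteristic $p$ forcing $\nu([C_i,g]-1)\geq p$, the identity $C_ig-gC_i=gC_i([C_i,g]-1)$ with $\nu(gC_i)=0$, and the final comparison $p\geq5>3=\nu(C_i-1)+\nu(g-1)$ all check out. Your closing remark that the $A_i$ pairwise commute and the $B_j$ pairwise commute in $G$ is true but not actually used anywhere in the argument as you have structured it; the paper's observation that all graded commutators of the $a_i,b_j$ lie in the $\mathbf F$-span of the $c_k$ already suffices for what you need there.

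The dimension count you sketch for the polynomial-ring assertion has a real gap, and the gap is precisely that it presupposes centrality. The surjection $\mathbf F[a_0,\dots,b_{f-1}]\twoheadrightarrow\gr\mathbf F\llbracket G\rrbracket/(c_0,\dots,c_{f-1})$ and your count of the $z^{\underline x}$ with all $C_j$-exponents zero both give the same \emph{upper} bound on the dimension of the degree-$k$ piece of the quotient; what you still need for injectivity is the reverse inequality, i.e.\ that the two-sided ideal $(c_0,\dots,c_{f-1})$ in degree $k$ does not spill beyond the span of those $[z^{\underline x}]$ having some $C_j$-exponent positive. That containment is exactly what centrality buys you: once the $c_i$ are known to be central, $(c_0,\dots,c_{f-1})=\sum_i c_i\gr\mathbf F\llbracket G\rrbracket$, and each $c_i\cdot[z^{\underline x}]$ re-sorts to a single basis monomial $[z^{\underline y}]$ with the $C_i$-exponent raised by one, so the ideal in degree $k$ is precisely the span of such monomials and has the expected dimension. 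You should therefore prove centrality first and then deduce the polynomial-ring structure; as written, the order is reversed and the first step is unjustified.
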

	Note that, since $N\geq1$, the ring $\gr_\nu\mathbf F\llbracket G^{p^N}\rrbracket$ has even better properties:
	\begin{proposition}\label{centrality}
		The ring $\gr_\nu\mathbf F\llbracket G^{p^N}\rrbracket=\mathbf F[a_0^{p^N},b_0^{p^N},c_0^{p^N},\dots,a_{f-1}^{p^N},b_{f-1}^{p^N},c_{f-1}^{p^N}]$ is a commutative polynomial $\mathbf F$-algebra. It is a subring of the center of $\gr\mathbf F\llbracket G\rrbracket$.
	\end{proposition}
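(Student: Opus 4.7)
The plan is to establish three claims in order: (i) the listed elements lie in $\gr_\nu\mathbf F\llbracket G^{p^N}\rrbracket$ and generate it as an $\mathbf F$-algebra, (ii) they commute pairwise, and (iii) they satisfy no nontrivial polynomial relation.

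For (i), I would begin with the identity $(g-1)^{p^N}=g^{p^N}-1$ in $\mathbf F\llbracket G\rrbracket$, which holds in characteristic $p$ because all intermediate binomial coefficients $\binom{p^N}{k}$ vanish. Thus $(g_j^{p^N}-1)=(g_j-1)^{p^N}$ lies in $\mathbf F\llbracket G^{p^N}\rrbracket$ and, by multiplicativity of $\nu$, has image $a_j^{p^N}$, $b_j^{p^N}$, or $c_j^{p^N}$ in $\gr\mathbf F\llbracket G\rrbracket$. The expansion (\ref{groupringpN}) exhibits every element of $\mathbf F\llbracket G^{p^N}\rrbracket$ as an $\mathbf F$-linear combination of monomials $\prod_j(g_j^{p^N}-1)^{y_j}$, so the stated elements generate $\gr_\nu\mathbf F\llbracket G^{p^N}\rrbracket$.

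For (ii), the preceding proposition identifies $\gr\mathbf F\llbracket G\rrbracket/(c_0,\dots,c_{f-1})$ with a commutative polynomial ring, so each of $[a_i,a_j]$, $[a_i,b_j]$, $[b_i,b_j]$ (all of degree $2$) lies in the two-sided ideal $(c_0,\dots,c_{f-1})$. Centrality of the $c_k$ forces the degree-$2$ part of this ideal to equal $\mathbf F c_0\oplus\cdots\oplus\mathbf F c_{f-1}$, which is itself central. The elementary identity $[x^n,y]=nx^{n-1}[x,y]$, valid whenever $[x,y]$ is central (proved by induction on $n$), then gives $[a_i^p,b_j]=0$ in characteristic $p$; so $a_i^p$ commutes with $b_j$, hence with $b_j^{p^N}$, and iterating yields $[a_i^{p^N},b_j^{p^N}]=0$. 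The same argument handles $[a_i^{p^N},a_j^{p^N}]$ and $[b_i^{p^N},b_j^{p^N}]$, while each $c_i^{p^N}$ is central from the start.

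For (iii), commutativity exhibits $\gr_\nu\mathbf F\llbracket G^{p^N}\rrbracket$ as a quotient of the polynomial ring on the $3f$ symbols; but the ordered monomials in $a_j^{p^N},b_j^{p^N},c_j^{p^N}$ correspond bijectively to the $z^{\underline x}$ with $\underline x\in p^N\mathbf Z_{\geq 0}^{3f}$, whose classes are $\mathbf F$-linearly independent in each graded piece of $\gr\mathbf F\llbracket G\rrbracket$ via the ordered-basis description. Hence no nontrivial relation can hold. The main obstacle is step (ii), and more specifically the observation that the degree-$2$ part of $(c_0,\dots,c_{f-1})$ lies in the center—it is this that allows the Heisenberg-style $p$-power identity to kill all residual commutators in characteristic $p$.
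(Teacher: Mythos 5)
Your argument is correct and follows essentially the same route as the paper: the heart of both is that commutators of the degree-one generators land in the central span of the $c_j$, so the identity $[x^n,y]=nx^{n-1}[x,y]$ kills $[a_i^p,\cdot]$ and $[b_i^p,\cdot]$ in characteristic $p$. You merely spell out the generation and algebraic-independence steps (your (i) and (iii)) that the paper compresses into ``Using the previous proposition it therefore suffices to prove the $a_i^{p^N}$ and $b_i^{p^N}$ are central.''
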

	\begin{proof}
		The inclusion $\mathbf F\llbracket G^{p^N}\rrbracket\hookrightarrow\mathbf F\llbracket G\rrbracket$ induces an inclusion after taking $\gr_\nu$. We have $\gr_\nu\mathbf F\llbracket G\rrbracket=\gr\mathbf F\llbracket G\rrbracket$ by Proposition \ref{maxideals}.\\
		We first prove that the $a_i^{p^N}$ and $b_i^{p^N}$ are central. This follows from their centrality in $\gr\mathbf F\llbracket G\rrbracket$ which is seen as follows: take any generator $x\in\{a_0,b_0,\dots,a_{f-1},b_{f-1}\}$; then, as mentioned, $[a_i,x]$ lies in the span of the $c_j$; by the previous proposition $[a_i,x]$ is therefore central, from this and induction we get the formula $[a_i^\ell,x]=\ell a_i^{\ell-1}[a_i,x]$ for all $\ell\geq1$, since indeed we have \begin{equation*}
			[a_i^{\ell+1},x]=a_i[a_i^\ell,x]+a_ixa_i^\ell-xa_i^{\ell+1}=a_i[a_i^\ell,x]+[a_i,x]a_i^\ell
		\end{equation*} and to conclude from this last expression one uses the induction hypothesis and centrality of $[a_i,x]$; in particular, we see that $a_i^p$ is already central (the proof works the same for $b_i$ of course).\\
		Consider now the graded polynomial ring $\mathbf F[y_1,\dots,y_{3f}]$ which has the first $2f$ variables homogeneous in degree $1$ and the last $f$ variables homogeneous in degree $2$. It can be interpreted as the associated graded ring to $\mathbf F\llbracket p^N\mathbf Z_p^{3f}\rrbracket$ where $p^N\mathbf Z_p^{3f}$ is given the $p$-valuation induced from its homeomorphism to $G^{p^N}$. The homeomorphism yields an isomorphism of $\mathbf F$-modules $\mathbf F[y_1,\dots,y_{3f}]\xrightarrow{\sim}\gr_\nu\mathbf F\llbracket G^{p^N}\rrbracket$. By the previous proposition and the centrality of the $a_i^{p^N}$ and the $b_i^{p^N}$ this is actually a morphism of $\mathbf F$-algebras.
	\end{proof}
	For now and the rest of the text fix a (soon to be assumed homogeneous) two-sided ideal $$J\subset\gr\mathbf F\llbracket G\rrbracket$$ that yields a commutative ring when quotienting by it. That is, $J$ contains the $c_i$ (in fact, any left or right ideal containing all of the $c_i$ is two-sided). It is finitely generated, as left or right ideal, and we can therefore write it as \begin{equation*}\label{J}
		\begin{aligned}
			&J=\mathfrak f+\mathfrak c,\\
			&\mathfrak f\coloneqq(f_1,\dots,f_n),\quad\mathfrak c\coloneqq(c_0,\dots,c_{f-1}),\\
			&f_i=\sum_{\underline m,\underline n\in\mathbf Z_{\geq0}^f}\alpha_{\underline m,\underline n,i}a_0^{m_0}\cdots a_{f-1}^{m_{f-1}}b_0^{n_0}\cdots b_{f-1}^{n_{f-1}};
		\end{aligned}
	\end{equation*} we require our generators to generate the ideal both as left and as right ideal.
	\begin{definition}
		Let $\mathcal C_J$ be the category of $\mathbf F\llbracket G\rrbracket$-modules $M$ such that $\gr M$ is killed by a power of $J$ (with morphisms being the usual ones).
	\end{definition}
	If $\tilde J$ is the smallest homogeneous ideal containing $J$ then $\mathcal C_{\tilde J}=\mathcal C_J$, so without loss of generality we assume $J$ to be homogeneous. We also assume that our $f_i$ are all homogeneous.\\
	Let us now define an analogous ideal $J_N\subset\gr_\nu\mathbf F\llbracket G^{p^N}\rrbracket$:
	\begin{equation*}\label{JN}
		\begin{aligned}
			&J_N\coloneqq\tilde{\mathfrak f}+(c_0^{p^N},\dots,c_{f-1}^{p^N}),\\
			&\tilde{\mathfrak f}\coloneqq(\tilde f_1,\dots,\tilde f_n),\\
			&\tilde f_i\coloneqq\sum_{\underline m,\underline n\in\mathbf Z_{\geq0}^f}\alpha_{\underline m,\underline n,i}^{p^N}a_0^{p^Nm_0}\cdots a_{f-1}^{p^Nm_{f-1}}b_0^{p^Nn_0}\cdots b_{f-1}^{p^Nn_{f-1}}.
		\end{aligned}
	\end{equation*}
	From the homogeneity of $J$ we see that $J_N$ is also homogeneous.
	\section{Comparing annihilation conditions and proof of Theorem \ref{main}}
	\begin{lemma}
		Let $M$ be an $\mathbf F\llbracket G\rrbracket$-module. Then $M\in\mathcal C_J$ if and only if $\gr M$ is killed by a power of $J_N$.
	\end{lemma}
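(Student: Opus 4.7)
The plan is to show the stronger statement that $J_N \subset J$ and $J^k \subset J_N$ as two-sided ideals of $A \coloneqq \gr\mathbf F\llbracket G\rrbracket$ for some $k$ depending on $J, N$. This implies the claimed equivalence of annihilation conditions via the elementary fact $(J^k)^\ell = J^{k\ell}$, which holds even in non-commutative rings.

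The inclusion $J_N \subset J$ is easy to check on generators: $c_i^{p^N} \in J$ since $c_i \in J$, and for $\tilde f_i$ I would use that the quotient $A/\mathfrak c = \mathbf F[a_0,b_0,\dots,a_{f-1},b_{f-1}]$ is the commutative polynomial ring, in which the Frobenius identities $(x+y)^{p^N}=x^{p^N}+y^{p^N}$ and $(xy)^{p^N}=x^{p^N}y^{p^N}$ force $f_i^{p^N} = \tilde f_i$; hence $\tilde f_i - f_i^{p^N} \in \mathfrak c \subset J$ while $f_i^{p^N} \in J$. This already yields the easy direction of the lemma: $J^\ell \gr M = 0$ gives $J_N^\ell \gr M \subset J^\ell \gr M = 0$.

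For the converse I would work inside $\bar A \coloneqq A/J_N$. Since $\bar c_i^{p^N} = 0$ and the $c_i$ are central, a pigeonhole argument on a product $\bar c_{i_1}\cdots\bar c_{i_M}$ (after using centrality to move all surrounding ring elements to one side) shows $\bar{\mathfrak c}^{M} = 0$ for $M = f(p^N-1)+1$. Passing further to $\bar A/\bar{\mathfrak c}$, which identifies with the commutative ring $\mathbf F[a_i,b_i]/(f_i^{p^N})$ (using once more $\tilde f_i \equiv f_i^{p^N} \pmod{\mathfrak c}$), the images of the $f_i$ are annihilated by their $p^N$-th powers, so another pigeonhole gives $\bar{\mathfrak f}^L \subset \bar{\mathfrak c}$ for $L = n(p^N-1)+1$, and in particular $\bar{\mathfrak f}$ is nilpotent in $\bar A$. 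Finally, because $\bar{\mathfrak c}$ is central in $\bar A$, one may expand $(\bar{\mathfrak f}+\bar{\mathfrak c})^k \subset \sum_{i+j=k} \bar{\mathfrak f}^i\bar{\mathfrak c}^j$, which vanishes for $k$ sufficiently large, yielding $J^k \subset J_N$ as desired.

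The main subtlety is the non-commutativity of $A$: pigeonhole arguments and sum-of-nilpotents arguments usually require commutativity, but the centrality of $\mathfrak c$ (and hence of $\bar{\mathfrak c}$ in $\bar A$) allows free reordering past the $c_i$-factors, which is precisely what makes each of the three steps above go through.
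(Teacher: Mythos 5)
Your argument is correct and takes essentially the same route as the paper: both directions rest on the Frobenius congruence $\tilde f_i \equiv f_i^{p^N} \pmod{\mathfrak c}$ to get $J_N\subset J$, and on pigeonhole arguments exploiting commutativity of $\gr\mathbf F\llbracket G\rrbracket/\mathfrak c$ together with centrality of the $c_i$ to land a high power of $J$ inside $J_N\gr\mathbf F\llbracket G\rrbracket$ (the paper does this in two chained inclusions rather than your three-step passage through quotient rings, but the ingredients are identical). One terminological nit: the ideal $\mathfrak c$ (and hence $\bar{\mathfrak c}$) is not itself central as a subset of the ring---what is central, and what you actually use when reordering past the $c_i$-factors, is the finite set of generators $c_i$; this is what makes both the nilpotence of $\bar{\mathfrak c}$ and the expansion $(\bar{\mathfrak f}+\bar{\mathfrak c})^k\subset\sum_{i+j=k}\bar{\mathfrak f}^i\bar{\mathfrak c}^j$ legitimate.
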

	\begin{proof}
		Suppose first that $M$ is in $\mathcal C_J$. Since $\gr\mathbf F\llbracket G\rrbracket/\mathfrak c$ is commutative, we have $f_i^{p^N}\equiv\tilde f_i\mod\mathfrak c$ for all $i$ so that $J_N\subset J$. Therefore $\gr M$ is killed by a power of $J_N$.\\
		Now suppose $\gr M$ killed by a power of $J_N$. Since $J$ has $f+n$ generators it follows from the pigeonhole principle (adding $\mathfrak c$ to avoid thinking about non-commutativity issues) that $$J^{(f+n)p^N}\subset (f_1^{p^N},\dots,f_n^{p^N},c_0^{p^N},\dots,c_{f-1}^{p^N})+\mathfrak c.$$ (One can interpret it as the left-ideal or the right-ideal generated by those elements and one obtains the same (two-sided) ideal in both cases.)
		The rightmost ideal is contained in $\mathfrak c+J_N\gr\mathbf F\llbracket G\rrbracket$ by the previously mentioned congruence and because $J_N\gr\mathbf F\llbracket G\rrbracket$ is a two-sided ideal (by Proposition \ref{centrality} it is generated by central elements) we get  $$\left(\mathfrak c+J_N\gr\mathbf F\llbracket G\rrbracket\right)^{fp^N}\subset\mathfrak c^{fp^N}+J_N\gr\mathbf F\llbracket G\rrbracket\subset J_N\gr\mathbf F\llbracket G\rrbracket,$$ from which it follows that $\gr M$ is killed by a power of $J$.
	\end{proof}
	\begin{lemma}\label{anotation}
		Let $M$ be an $\mathbf F\llbracket G\rrbracket$-module. Then $M\in\mathcal C_J$ if and only if $\grint M$ is killed by a power of $J_N$.
	\end{lemma}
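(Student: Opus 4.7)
By the previous lemma, $M\in\mathcal C_J$ is equivalent to $\gr M$ being killed by a power of $J_N$, so it suffices to compare the conditions ``$\gr M$ killed by a power of $J_N$'' and ``$\grint M$ killed by a power of $J_N$''. The plan is to equip each piece $\grint^iM=\mathfrak m_G^{ip^N}M/\mathfrak m_G^{(i+1)p^N}M$ with the \emph{fine filtration}
\[
F_j\grint^iM\coloneqq\mathfrak m_G^{ip^N+j}M/\mathfrak m_G^{(i+1)p^N}M,\qquad 0\leq j\leq p^N.
\]
This filtration has length $p^N$ and satisfies $F_j/F_{j+1}\cong\gr^{ip^N+j}M$ for $0\leq j<p^N$; so summing over $i$, the associated graded of the fine filtration on $\grint M$ reconstitutes $\gr M$. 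For any homogeneous $y\in\gr_\nu\mathbf F\llbracket G^{p^N}\rrbracket$ of degree $kp^N$, lifting to $\tilde y\in\mathfrak m_G^{kp^N}$ and acting on lifts in $\mathfrak m_G^{ip^N+j}M$ yields an element of $\mathfrak m_G^{(i+k)p^N+j}M$, so the map $y\colon\grint^iM\to\grint^{i+k}M$ preserves the fine filtration, and the induced map on $\gr_F$ is exactly the $y$-action on the corresponding graded pieces of $\gr M$.

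For the ``only if'' direction, assume $\gr M$ is killed by $J_N^s$; since $J_N$ is homogeneous I may restrict attention to homogeneous $y\in J_N^s$. Unwinding the containment $\tilde y\mathfrak m_G^\ell M\subset\mathfrak m_G^{\ell+\deg y+1}M$ (for every $\ell$), the hypothesis is equivalent to the statement that every such $y$ strictly increases the fine filtration index by at least one on each $\grint^iM$. A product of $p^N$ such homogeneous elements then raises the fine filtration index by at least $p^N$, landing in $F_{p^N}=0$; hence $J_N^{sp^N}$ kills $\grint M$.

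For the ``if'' direction, assume $\grint M$ is killed by $J_N^s$; I claim the same exponent works for $\gr M$. Take homogeneous $y\in J_N^s$ of degree $kp^N$ and $z\in\gr^\ell M$; write $\ell=ip^N+j$ with $0\leq j<p^N$ and lift to $\tilde z\in\mathfrak m_G^\ell M$. The class of $\tilde z$ lies in $F_j\grint^iM$, so by hypothesis $\tilde y\tilde z\in\mathfrak m_G^{(i+k+1)p^N}M$; since $j<p^N$ this is contained in $\mathfrak m_G^{(i+k)p^N+j+1}M$, giving $yz=0$ in $\gr^{\ell+kp^N}M$. Thus $J_N^s$ already kills $\gr M$. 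The main obstacle is the bookkeeping for the fine filtration and checking that the $\gr_\nu\mathbf F\llbracket G^{p^N}\rrbracket$-action respects it; once that is in place, both directions are short filtration arguments, the only quantitative cost being the factor $p^N$ (the length of the fine filtration) picked up in the ``only if'' direction.
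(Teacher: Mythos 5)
Your proof is correct and takes essentially the same approach as the paper's: both directions reduce to the elementary bookkeeping that a homogeneous element of $J_N^s$ of degree $kp^N$ sends $\mathfrak m_G^\ell M$ into $\mathfrak m_G^{\ell+kp^N+1}M$, with the ``only if'' direction iterating $p^N$ times to clear a full $p^N$-block and the ``if'' direction using $(i+k+1)p^N\geq\ell+kp^N+1$ when $\ell=ip^N+j$, $j<p^N$. The ``fine filtration'' on $\grint^iM$ is a nice way to organize this, but it encodes exactly the same estimates the paper carries out directly on generators of $J_N^\ell$.
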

	\begin{proof}
		Suppose first that $M$ is in $\mathcal C_J$, so that $\gr M$ is killed by a power of $J_N$ by the previous lemma, say by $J_N^\ell$, $\ell\geq1$. The ideal $J_N^\ell$ is a homogeneous ideal of $\gr_{\nu}\mathbf F\llbracket G^{p^N}\rrbracket$ and therefore generated by residue classes of elements $g_1,\dots g_t\in\mathbf F\llbracket G^{p^N}\rrbracket$ that are homogenous of degree $\ell_1p^N,\dots,\ell_tp^N$, respectively, with the $\ell_i\in\mathbf Z_{\geq0}$ (remember that the graded ring is $p^N\mathbf Z$-graded). From $J_N^\ell\gr M=0$ we find that $g_i\mathfrak m_G^kM\subset\mathfrak m_G^{k+\ell_ip^N+1}M$ for all $i$ and $k$. Iteration yields $$g_i^{p^N}\mathfrak m_G^kM\subset\mathfrak m_G^{k+\ell_ip^{2N}+p^N}M.$$
		Applied to all $k\in p^N\mathbf Z_{\geq0}$ we find that $J_N^{t\ell p^N}$ (even already $J_N^{\ell p^N}$) kills $\grint M$.\\
		Conversely, suppose $\grint M$ is killed by a power of $J_N$, say by the ideal $J_N^\ell$ for which we use the same description as above. Let $j\geq0$ and write $\frac j{p^N}=\lfloor\frac j{p^N}\rfloor+\{\frac j{p^N}\}$, where $\lfloor-\rfloor$ denotes the floor function. Then $$g_i\mathfrak m_G^jM\subset g_i\mathfrak m_G^{\lfloor\frac j{p^N}\rfloor p^N}M\subset\mathfrak m_G^{\left(\lfloor\frac j{p^N}\rfloor+\ell_i+1\right)p^N}M=\mathfrak m_G^{j+\ell_ip^N+\left(1-\{\frac j{p^N}\}\right)p^N}M\subset\mathfrak m_G^{j+\ell_ip^N+1}M,$$ which proves $J_N^\ell\gr M=0$, so that $M\in\mathcal C_J$ by the previous lemma.
	\end{proof}
	\begin{lemma}
		For every $k\in\mathbf Z_{\geq0}$ we have the following two inclusions $$\mathfrak n_k\mathbf F\llbracket G\rrbracket\subset\mathfrak m_G^{kp^N}\subset\mathfrak n_{k-4f}\mathbf F\llbracket G\rrbracket,$$
		where we note that the previous definition $\mathfrak n_i\coloneqq\mathbf F\llbracket G^{p^N}\rrbracket_{\nu\geq ip^N}$ still makes sense for $i<0$.
	\end{lemma}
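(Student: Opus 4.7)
The first inclusion is straightforward from Proposition \ref{maxideals}: elements of $\mathfrak n_k$ have $\nu$-valuation $\geq kp^N$, hence lie in $\mathfrak m_G^{kp^N}$, which is a two-sided ideal of $\mathbf F\llbracket G\rrbracket$, so $\mathfrak n_k \mathbf F\llbracket G\rrbracket \subset \mathfrak m_G^{kp^N}$.

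For the second inclusion the plan is to establish a $\overline w$-compatible decomposition of $\mathbf F\llbracket G\rrbracket$ as a free left $\mathbf F\llbracket G^{p^N}\rrbracket$-module: concretely, that every $x \in \mathbf F\llbracket G\rrbracket$ admits a unique expansion $x = \sum_{\underline r \in R} s_{\underline r} z^{\underline r}$ with $s_{\underline r} \in \mathbf F\llbracket G^{p^N}\rrbracket$ (where $R \coloneqq \{0, 1, \ldots, p^N - 1\}^{3f}$), and that $\overline w(x) = \min_{\underline r \in R}(\overline w(s_{\underline r}) + \overline w(z^{\underline r}))$. Granted this, the conclusion is immediate: if $x \in \mathfrak m_G^{kp^N}$, i.e.\ $\overline w(x) \geq kp^N/2$, each term satisfies $\overline w(s_{\underline r}) \geq kp^N/2 - \overline w(z^{\underline r}) \geq kp^N/2 - 2f(p^N - 1)$ (using $\omega(g_j) \leq 1$ and $r_j \leq p^N - 1$), whence $\nu(s_{\underline r}) \geq (k - 4f)p^N + 4f \geq (k - 4f)p^N$, placing $s_{\underline r}$ in $\mathfrak n_{k - 4f}$ and so $x \in \mathfrak n_{k - 4f} \mathbf F\llbracket G\rrbracket$.

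To produce the decomposition, I will study the PBW-style family $\{y^{\underline q} z^{\underline r}\}$ indexed by $\underline q \in \mathbf Z_{\geq 0}^{3f}$ and $\underline r \in R$, where $y^{\underline q} \coloneqq \prod_{j=1}^{3f}(g_j^{p^N} - 1)^{q_j} = z^{p^N \underline q}$ — the last equality since $(g_j - 1)^{p^N} = g_j^{p^N} - 1$ in characteristic $p$ — and note that $\{y^{\underline q}\}$ is a topological $\mathbf F$-basis of $\mathbf F\llbracket G^{p^N}\rrbracket$ by (\ref{groupringpN}). The key input is the centrality of $a_i^{p^N}$ and $b_i^{p^N}$ in $\gr \mathbf F\llbracket G\rrbracket$, proved in the proposition preceding: it implies that commuting a factor $(g_j^{p^N} - 1)^{q_j}$ past a factor $(g_k - 1)^{r_k}$ produces a commutator with vanishing graded image, hence of $\overline w$ strictly greater than $p^N q_j \omega(g_j) + r_k \omega(g_k)$. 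Iterating such commutations so that matching generators become adjacent, we get $y^{\underline q} z^{\underline r} \equiv z^{p^N \underline q + \underline r}$ modulo terms of strictly greater $\overline w$; equivalently, the change of basis from $\{y^{\underline q} z^{\underline r}\}$ to the standard topological basis $\{z^{\underline x}\}$ of (\ref{complgrpring}) is triangular with identity diagonal relative to the $\overline w$-filtration (using the bijection $(\underline q, \underline r) \leftrightarrow p^N \underline q + \underline r$). This should formally make $\{y^{\underline q} z^{\underline r}\}$ itself a topological $\mathbf F$-basis of $\mathbf F\llbracket G\rrbracket$ with $\overline w$-compatibility, from which the desired decomposition follows by collecting terms of a fixed $\underline r$.

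The main technical hurdle I foresee lies in justifying this triangular change-of-basis argument rigorously in the topological completion: one must handle infinite sums over $\underline q$, verify that the iterated correction terms from successive commutations converge, and check that no cancellation at the minimal $\overline w$-level occurs among them, so that the equality $\overline w(x) = \min_{\underline r}(\overline w(s_{\underline r}) + \overline w(z^{\underline r}))$ — not merely the inequality $\geq$ — holds for every $x$.
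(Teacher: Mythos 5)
Your first inclusion is exactly the paper's (one line from Proposition \ref{maxideals}). For the second you take a genuinely different route. The paper works element-by-element: it expands $x\in\mathfrak m_G^{kp^N}$ in the monomial basis of (\ref{complgrpring}), replaces each monomial $x_{\underline k,\underline\ell,\underline m}$ by a corrected element $\tau(x_{\underline k,\underline\ell,\underline m})\in\mathfrak n_{k-4f}\mathbf F\llbracket G\rrbracket$ obtained by sliding the $p^N$-th-power blocks of each factor to the front, notes that the error $x_{\underline k,\underline\ell,\underline m}-\tau(x_{\underline k,\underline\ell,\underline m})$ has strictly larger $\nu$, iterates, and closes the argument by observing that $\mathfrak n_{k-4f}\mathbf F\llbracket G\rrbracket$ is a finitely generated right $\mathbf F\llbracket G\rrbracket$-module and hence a closed subset. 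You instead aim at a structural statement: that $\mathbf F\llbracket G\rrbracket$ is a free left $\mathbf F\llbracket G^{p^N}\rrbracket$-module with an $\overline w$-compatible basis $\{z^{\underline r}\}_{\underline r\in R}$, from which the inclusion drops out by the same numerology ($\overline w(z^{\underline r})\leq 2f(p^N-1)$, hence $\nu(s_{\underline r})\geq(k-4f)p^N$). Both proofs pivot on the identical key input --- centrality in $\gr\mathbf F\llbracket G\rrbracket$ of the $p^N$-th powers of the generators (you write $a_i^{p^N},b_i^{p^N}$, but you also silently use $c_i^{p^N}$) --- and your leading-term congruence $y^{\underline q}z^{\underline r}\equiv z^{p^N\underline q+\underline r}$ is exactly the paper's $\tau$ applied to a single monomial. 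Your framing proves more (an $\overline w$-compatible PBW-type decomposition over the subalgebra, which would be reusable elsewhere), at the cost of having to establish the topological-basis and no-cancellation claims, which you flag but do not discharge; the paper sidesteps the free-module statement entirely and handles the analogous convergence/limit issue via the closedness of $\mathfrak n_{k-4f}\mathbf F\llbracket G\rrbracket$. The remaining work you identify is routine for complete filtrations with finite-dimensional graded pieces (a Nakayama-style triangularity argument), so what you have is a sound but unfinished outline of a stronger lemma, whereas the paper gives a self-contained proof of exactly the inclusion needed.
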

	\begin{proof}
		The first inclusion is trivial. For the second, notice first of all that each element of $\mathfrak m_G^{kp^N}$ can be written as an infinite $\mathbf F$-linear combination of elements of the form $$x_{\underline k,\underline\ell,\underline m}\coloneqq(A_0-1)^{k_0}\cdots(A_{f-1}-1)^{k_{f-1}}(B_0-1)^{\ell_0}\cdots(B_{f-1}-1)^{\ell_{f-1}}(C_0-1)^{m_0}\cdots(C_{f-1}-1)^{m_{f-1}}$$ with $\nu(x_{\underline k,\underline\ell,\underline m})=\sum_ik_i+\ell_i+2m_i\geq kp^N$, by (\ref{complgrpring}) and Proposition \ref{maxideals}.\\
		Then set \begin{align*}\tau(x_{\underline k,\underline\ell,\underline m})\coloneqq&(A_0-1)^{\lfloor\frac{k_0}{p^N}\rfloor p^N}\cdots(A_{f-1}-1)^{\lfloor\frac{k_{f-1}}{p^N}\rfloor p^N}(B_0-1)^{\lfloor\frac{\ell_0}{p^N}\rfloor p^N}\cdots(B_{f-1}-1)^{\lfloor\frac{\ell_{f-1}}{p^N}\rfloor p^N}\\\cdot&(C_0-1)^{\lfloor\frac{m_0}{p^N}\rfloor p^N}\cdots(C_{f-1}-1)^{\lfloor\frac{m_{f-1}}{p^N}\rfloor p^N}(A_0-1)^{\{\frac{k_0}{p^N}\} p^N}\cdots(A_{f-1}-1)^{\{\frac{k_{f-1}}{p^N}\} p^N}\\\cdot&(B_0-1)^{\{\frac{\ell_0}{p^N}\} p^N}\cdots(B_{f-1}-1)^{\{\frac{\ell_{f-1}}{p^N}\} p^N}(C_0-1)^{\{\frac{m_0}{p^N}\} p^N}\cdots(C_{f-1}-1)^{\{\frac{m_{f-1}}{p^N}\} p^N},\end{align*} where we again use the notation $\lfloor-\rfloor$ and $\{-\}$ for the floor function and the remaining fractional part, respectively.\\ Note that $\tau(x_{\underline k,\underline\ell,\underline m})\in\mathfrak n_{k-4f}\mathbf F\llbracket G\rrbracket$ because \begin{align*}
			\nu\Big((A_0-1)^{\lfloor\frac{k_0}{p^N}\rfloor p^N}\cdots(A_{f-1}-1)^{\lfloor\frac{k_{f-1}}{p^N}\rfloor p^N}&(B_0-1)^{\lfloor\frac{\ell_0}{p^N}\rfloor p^N}\cdots(B_{f-1}-1)^{\lfloor\frac{\ell_{f-1}}{p^N}\rfloor p^N}\\\cdot&(C_0-1)^{\lfloor\frac{m_0}{p^N}\rfloor p^N}\cdots(C_{f-1}-1)^{\lfloor\frac{m_{f-1}}{p^N}\rfloor p^N}\Big)\\=&\sum_i\lfloor\frac{k_i}{p^N}\rfloor p^N+\lfloor\frac{\ell_i}{p^N}\rfloor p^N+2\lfloor\frac{m_i}{p^N}\rfloor p^N\\\geq&(k-4f)p^N.
		\end{align*} Further note that $\nu(\tau(x_{\underline k,\underline\ell,\underline m}))=\nu(x_{\underline k,\underline\ell,\underline m})$ and that $\tau(x_{\underline k,\underline\ell,\underline m})-x_{\underline k,\underline\ell,\underline m}\in\mathfrak m_G^{\nu(x_{\underline k,\underline\ell,\underline m})+1}$ by the centrality of the $a_i^{p^N},b_i^{p^N},c_i^{p^N}$ in $\gr\mathbf F\llbracket G\rrbracket$. Consequently, $x_{\underline k,\underline\ell,\underline m}$ can be written as $\tau(x_{\underline k,\underline\ell,\underline m})$ plus an (infinite(?)) $\mathbf F$-linear combination of $x_{\underline k',\underline\ell',\underline m'}$ with $\nu(x_{\underline k',\underline\ell',\underline m'})>\nu(x_{\underline k,\underline\ell,\underline m})$. These $x_{\underline k',\underline\ell',\underline m'}$ can in turn be written as $\tau(x_{\underline k',\underline\ell',\underline m'})$ (in $\mathfrak n_{k-4f}\mathbf F\llbracket G\rrbracket$) plus $x_{\underline k',\underline\ell',\underline m'}-\tau(x_{\underline k',\underline\ell',\underline m'})$ (these being again of higher $\nu$-valuation), and so on.\\
		Repeating this process ad infinitum, we find that $x_{\underline k,\underline\ell,\underline m}$ is in the closure of $\mathfrak n_{k-4f}\mathbf F\llbracket G\rrbracket$, but since this is a finitely generated right-$\mathbf F\llbracket G\rrbracket$-module it is already closed. This proves the second inclusion, once more using the closedness of $\mathfrak n_{k-4f}\mathbf F\llbracket G\rrbracket$.
	\end{proof}
	\begin{lemma}\label{lastlemma}
		Let $M$ be an $\mathbf F\llbracket G\rrbracket$-module. Then $M\in\mathcal C_J$ if and only if $\gres M$ is killed by a power of $J_N$.
	\end{lemma}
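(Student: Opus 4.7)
The plan is to combine Lemma \ref{anotation} with the preceding lemma (the sandwich inclusion). By Lemma \ref{anotation}, the task reduces to showing that $\grint M$ is killed by a power of $J_N$ if and only if $\gres M$ is.

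The first step is to right-multiply the inclusions of the preceding lemma by $M$, obtaining the sandwich
\[
\mathfrak{n}_k M \subset \mathfrak{m}_G^{kp^N} M \subset \mathfrak{n}_{k - 4f} M
\]
for every $k \in \mathbf{Z}_{\geq 0}$. Thus the two descending filtrations on $M$ are cofinal up to a fixed shift of $4f$.

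I would handle both directions by an iteration argument. Assume first that $\grint M$ is killed by $J_N^\ell$. For a product $\tilde g_1 \cdots \tilde g_r$ of homogeneous lifts of elements of $J_N^\ell$, with $\tilde g_i \in \mathfrak{n}_{d_i}$, I start from $\mathfrak{n}_k M$, pass to $\mathfrak{m}_G^{kp^N} M$ via the left inclusion, apply the $\tilde g_i$ one at a time (each tightening the $\mathfrak{m}_G$-index by $+1$ thanks to the hypothesis on $\grint M$), and return to the $\mathfrak{n}_\bullet$-filtration via the right inclusion of the sandwich. A short computation then yields
\[
\tilde g_1 \cdots \tilde g_r \cdot \mathfrak{n}_k M \subset \mathfrak{n}_{k + \sum_i d_i + r - 4f} M,
\]
so choosing $r = 4f + 1$ shows that $J_N^{\ell(4f+1)}$ kills $\gres M$. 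The converse direction is entirely symmetric: starting from $\mathfrak{m}_G^{kp^N} M$ and applying $r = 4f + 1$ factors from $J_N^\ell$ yields $J_N^{\ell(4f+1)}\cdot\grint M = 0$, and then Lemma \ref{anotation} gives $M \in \mathcal{C}_J$.

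The main obstacle is that a single application of the sandwich falls short of what is required by the fixed shift $4f$. The key observation is that this is a one-time loss incurred only when switching filtrations, whereas each additional multiplication by an element of $J_N^\ell$ contributes a further $+1$ to the filtration index; hence raising to any power of $J_N^\ell$ of exponent at least $4f + 1$ absorbs the shift while still producing a bounded power of $J_N$.
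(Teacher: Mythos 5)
Your proposal is correct and follows essentially the same line as the paper's own proof: reduce to comparing $\grint M$ and $\gres M$ via Lemma \ref{anotation}, use the sandwich $\mathfrak n_k M\subset\mathfrak m_G^{kp^N}M\subset\mathfrak n_{k-4f}M$, and iterate through $4f+1$ homogeneous factors of $J_N^\ell$ so that the $+1$ gained per factor absorbs the one-time loss of $4f$ when switching filtrations. The bookkeeping (the bound $k+\sum_i d_i + r - 4f$ and the choice $r=4f+1$, yielding $J_N^{(4f+1)\ell}$) matches the paper's computation exactly.
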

	\begin{proof}
		Suppose first that $J_N^\ell$ kills $\grint M$ (\emph{i.e.~}$M\in\mathcal C_J$ by Lemma \ref{anotation}), using again the same notation as in the proof of Lemma \ref{anotation}. This means that, for any $k$, $g_i\mathfrak m_G^{kp^N}M\subset\mathfrak m_G^{(k+\ell_i+1)p^N}M$. Therefore, for example, $$g_i^{4f+1}\mathfrak m_G^{kp^N}M\subset\mathfrak m_G^{(k+(4f+1)\ell_i)p^N+(4f+1)p^N}M$$ and thus $$g_i^{4f+1}\mathfrak n_kM\subset\mathfrak n_{k+(4f+1)\ell_i+1}M$$ by the previous lemma.
		However, the class of $g_i^{4f+1}$ in $\gr_\nu\mathbf F\llbracket G^{p^N}\rrbracket$ is of degree $(4f+1)\ell_ip^N$ from which it follows that the class $g_i^{4f+1}$ kills $\gres M$. Generally one has $J_N^{(4f+1)\ell}\gres M=0$ because the class of any product $g_{i_1}\cdot\cdots\cdot g_{i_{4f+1}}$ is in degree $dp^N$, with $d\coloneqq\sum_{j=1}^{4f+1}\ell_{i_j}$, (and these products generate $J_N^{(4f+1)\ell}$) while for any $k$ $$g_{i_1}\cdot\cdots\cdot g_{i_{4f+1}}\mathfrak n_kM\subset\mathfrak n_{k+d+1}M.$$
		In the other direction one similarly proves, using the previous lemma again, that if $J_N^\ell\gres M=0$, then $J_N^{(4f+1)\ell}\grint M=0$.
	\end{proof}
	\begin{proof}[Proof of Theorem \ref{main}]
		Assume $N$ is big enough so that, modulo $Z_1$, $H$ contains $G^{p^N}$. Take for $J$ the ideal $J_{\mathcal C}$ of the definition of $\mathcal C$. Then $\gres\pi'^\vee$ is killed by a power of $J_N$ by Lemma \ref{lastlemma}. By the isomorphism of $\pi$ and $\pi'$ when restricted to $H$, we find that their duals are isomorphic $\mathbf F\llbracket G^{p^N}\rrbracket$-modules. Therefore $\gres\pi^\vee$ is killed by the same power of $J_N$, so that $\pi^\vee\in\mathcal C_J$ by Lemma \ref{lastlemma} again and hence $\pi$ is in $\mathcal C$ as well (note that $\pi$ is admissible because $\pi'$ is).
	\end{proof}
	\bibliographystyle{alpha}
	\bibliography{CatC} 
\end{document}